\theoremstyle{plain}
\newtheorem{theorem}{Theorem}
\newtheorem{lemma}[theorem]{Lemma}
\newtheorem{corollary}[theorem]{Corollary}
\newtheorem{conjecture}[theorem]{Conjecture}
\newtheorem{question}[theorem]{Question}
\theoremstyle{definition}
\newtheorem{definition}[theorem]{Definition}
\begin{document}

\title{An Equilateral Triangle of Side $> n$ Cannot be Covered by \(n^2 + 1\) Unit Equilateral Triangles Homothetic to it}
\markright{Covering a triangle with unit-sided triangles}

\author{Jineon Baek and Seewoo Lee}

\maketitle

\begin{abstract}
John Conway and Alexander Soifer showed that an equilateral triangle \(T\) of side slightly longer than $n$ can be covered by \(n^2 + 2\) unit equilateral triangles.
They also conjectured that it is impossible to cover $T$ with \(n^2 + 1\) unit equilateral triangles, no matter how close the side of \(T\) is to $n$.

While the Conway--Soifer conjecture remains open, we prove an important case where the sides of the triangles used for covering are parallel to the sides of $T$ (e.g., $\bigtriangleup$ and $\bigtriangledown$).
That is, we show that if all unit equilateral triangles are required to be homothetic to \(T\), 
then the minimum number of unit equilateral triangles that can cover \(T\) of side slightly longer than $n$ is exactly \(n^2 + 2\).

Our proof generalizes to covering $T$ by (not necessarily equilateral) triangles of base one parallel to the $x$-axis and height equal to that of a unit equilateral triangle.
Using our method, we also determine the largest side length $n + 1/(n + 1)$ (resp. $n + 1 / n$) of $T$ such that the equilateral triangle $T$ can be covered by $n^2+2$ (resp. $n^2 + 3$) unit equilateral triangles homothetic to $T$.
\end{abstract}

\section{Introduction.}

John Conway and Alexander Soifer showed that $n^2 + 2$ unit equilateral triangles can cover an equilateral triangle $T$ of side $> n$ by providing two coverings (Figures \ref{fig:triangle1} and \ref{fig:triangle2}) \cite{conway2004coverup, conway2005covering}.
Their paper was famously short as an attempt to set the world record for the shortest math paper ever; see \cite{soifer2009coffee} for the full story by the second author.
We provide a detailed explanation of their constructions in Section \ref{sec:description}.

\begin{figure}[h]
   \begin{minipage}{0.5\textwidth}
     \centering
     \includegraphics[width=0.9\linewidth]{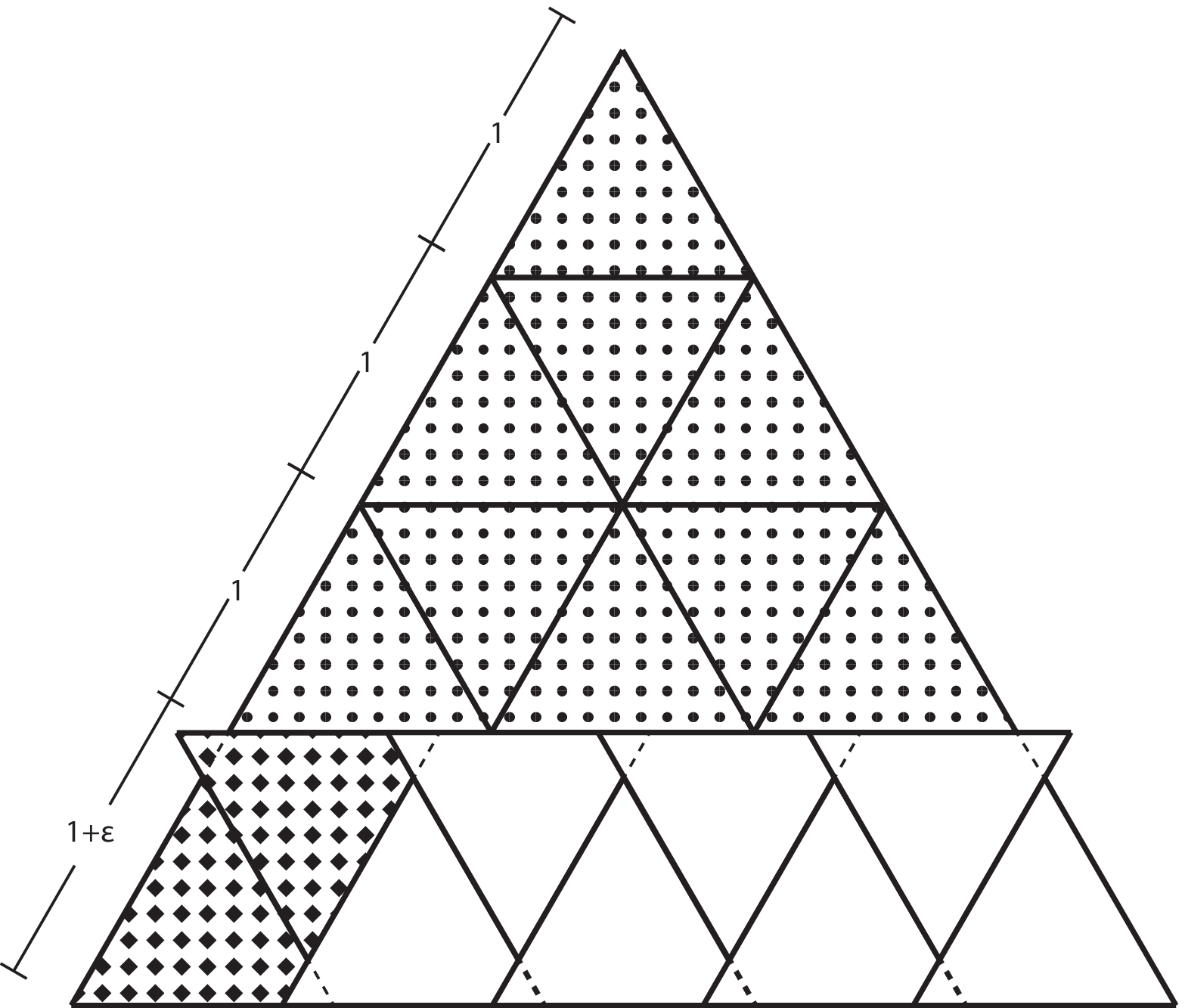}
     \caption{}
     \label{fig:triangle1}
   \end{minipage}\hfill
   \begin{minipage}{0.5\textwidth}
     \centering
     \includegraphics[width=0.9\linewidth]{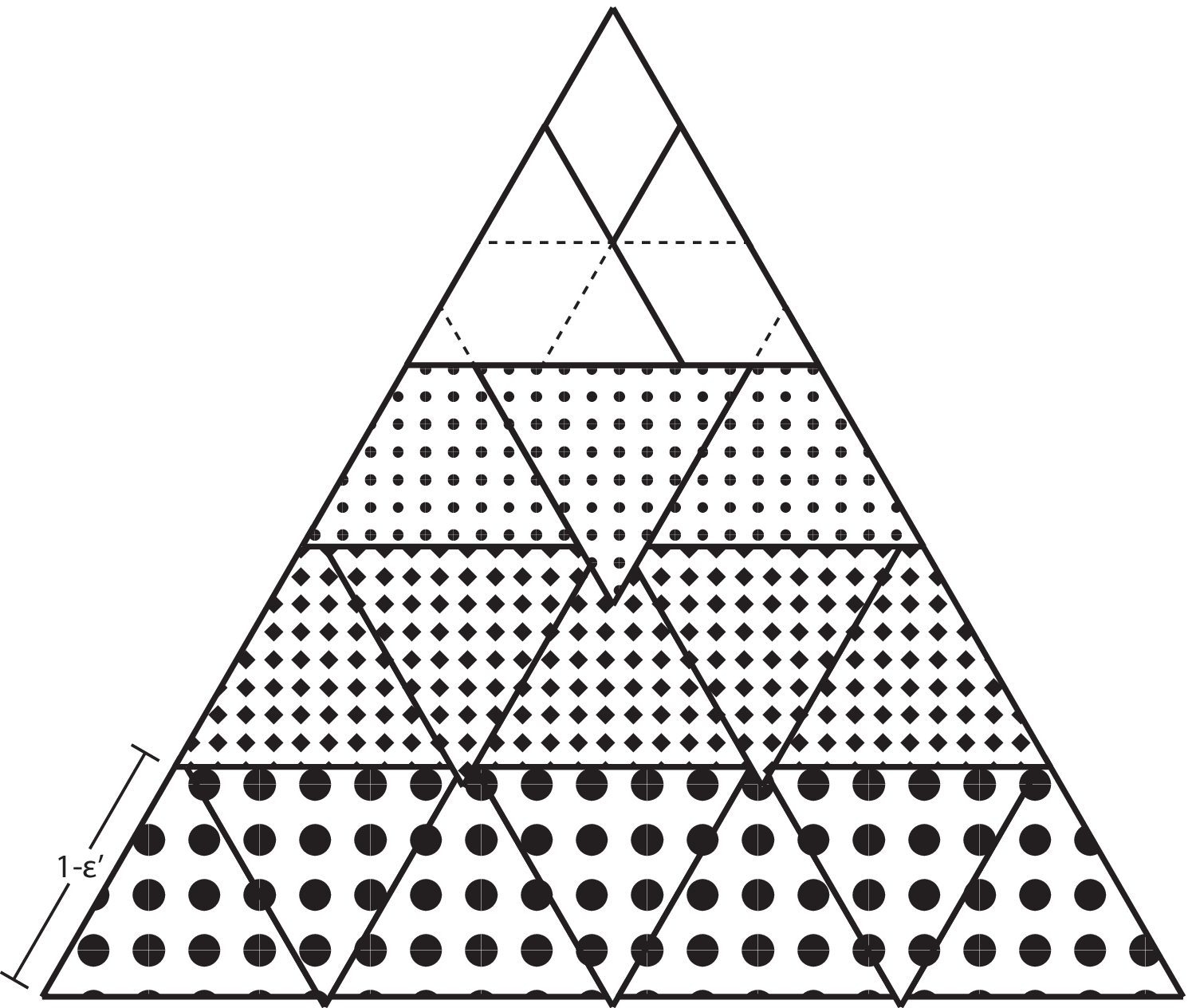}
     \caption{}
     \label{fig:triangle2}
   \end{minipage}
\end{figure}

\begin{theorem}[Conway and Soifer \cite{conway2004coverup, conway2005covering}]
\label{thm:main-prev}
$n^2 + 2$ unit equilateral triangles can cover an equilateral triangle $T$ of side $n + \varepsilon$ for a sufficiently small $\varepsilon > 0$.
\end{theorem}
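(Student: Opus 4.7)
The proof of Theorem~\ref{thm:main-prev} will be by explicit construction: the two coverings illustrated in Figures~\ref{fig:triangle1} and~\ref{fig:triangle2} each realize a covering of $T$ by $n^2+2$ unit equilateral triangles. My plan is to present one of these coverings in detail, then verify both that it uses exactly $n^2+2$ unit equilateral triangles (a direct count) and that every point of $T$ is covered (the substantive check) whenever $\varepsilon > 0$ is sufficiently small.

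Setting up Cartesian coordinates with $T$ having vertices $(0,0)$, $(n+\varepsilon, 0)$, and $((n+\varepsilon)/2, (n+\varepsilon)\sqrt{3}/2)$, I would first read off from Figure~\ref{fig:triangle1} explicit coordinates for each of the $n^2+2$ triangles. The naive first attempt of inscribing the standard $n^2$-tiling of a side-$n$ equilateral triangle into one corner of $T$ and patching the leftover with 2 extras fails for $n \geq 3$: the leftover is a parallelogram of length $\Theta(n)$ and width $\Theta(\varepsilon)$, which no bounded number of unit triangles can cover. The Conway--Soifer construction must therefore rely on a more coordinated arrangement --- for instance, distributing the ``excess'' length $\varepsilon$ evenly across many triangles by means of small translations along one side of $T$, and placing the two extra triangles at carefully chosen positions (plausibly near two of the three corners of $T$) so that they absorb the accumulated excess there. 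Note that Theorem~\ref{thm:main-prev} permits arbitrary position and orientation for the covering triangles, so non-homothetic placements are allowed in principle, although the figures appear to suggest a homothetic construction.

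The main obstacle is verification of coverage. Once the placement of the $n^2+2$ triangles is fixed, coverage reduces to an elementary geometric check: for every $(x, y) \in T$, one must exhibit at least one of the listed triangles that contains $(x, y)$. By the piecewise-linear nature of the arrangement, this factors into finitely many sub-checks, one per face of the induced planar subdivision, and each sub-check is a comparison of linear inequalities. I would carry this out for the specific configuration at hand, paying particular attention to the three corners of $T$ and to the ``pinch points'' where the union of the main $n^2$ triangles and the two extras meets; these are the places most likely to harbor an uncovered sliver. Having verified coverage at one convenient value of $\varepsilon$, a continuity argument (using compactness of $T$ and of each unit triangle) will extend the conclusion to all sufficiently small $\varepsilon > 0$.
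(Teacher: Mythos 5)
Your overall approach---prove the theorem by exhibiting an explicit covering and verifying it---is the same as the paper's, whose proof (Section \ref{sec:description}) is a detailed description of the two constructions in Figures \ref{fig:triangle1} and \ref{fig:triangle2}. You also correctly identify why the naive idea fails: inscribing the $n^2$-grid of a side-$n$ triangle in a corner leaves a sliver of length $\Theta(n)$ that two extra unit triangles cannot absorb. The genuine gap is that your proposal never actually specifies a covering: ``read off coordinates from the figure'' defers exactly the content that constitutes the proof, and your guessed mechanism (excess $\varepsilon$ spread evenly by small translations, with the two extra triangles placed near two corners of $T$) neither pins down an arrangement nor matches either actual construction. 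In Figure \ref{fig:triangle1} the slack is not spread evenly at all: the upper side-$(n-1)$ triangle is covered exactly by $(n-1)^2$ grid triangles, the remaining trapezoid (bases $n+\varepsilon$ and $n-1$, legs $1+\varepsilon$) is covered by $2n-1$ unit triangles interleaved from the right, and all of the slack is thereby pushed into a single thin parallelogram with sides $1+\varepsilon$ and $n\varepsilon$ at the other end, which the last two triangles cover precisely when $\varepsilon \leq 1/(n+1)$; the count is $(n-1)^2+(2n-1)+2=n^2+2$. Figure \ref{fig:triangle2} is closer to your ``distribute the excess'' idea, but there the distribution is layer by layer, with deviations $\varepsilon'=\varepsilon/(n-1)$, $\varepsilon''=n\varepsilon'/(n-2)$, \dots{} growing up the triangle, and the top is finished by three (not two) remaining triangles, which works for $\varepsilon\leq 1/(2n)$.

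Because no arrangement is fixed, your plan to verify coverage by finitely many linear-inequality checks has nothing to operate on, and the quantitative heart of the statement---how small ``sufficiently small'' must be---never appears; in the actual proof the admissible range of $\varepsilon$ (e.g.\ $\varepsilon\leq 1/(n+1)$ for Figure \ref{fig:triangle1}) falls out of the same computation that certifies coverage. Your closing continuity/compactness step is also more than is needed: if a triangle of side $n+\varepsilon_0$ can be covered, then so can any triangle of side $n+\varepsilon$ with $0<\varepsilon\leq\varepsilon_0$, simply because the smaller triangle can be placed inside the larger one. To turn the proposal into a proof, commit to one of the two explicit arrangements and carry out the verification together with its accompanying bound on $\varepsilon$.
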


In the same work, they also conjectured that $n^2 + 1$ unit equilateral triangles cannot cover any equilateral triangle $T$ of side $> n$ \cite{conway2005covering}.

\begin{conjecture}[Conway and Soifer \cite{conway2004coverup}]
\label{conj:main}
$n^2 + 1$ unit equilateral triangles cannot cover an equilateral triangle $T$ of side $n + \varepsilon$ for any $\varepsilon > 0$.
\end{conjecture}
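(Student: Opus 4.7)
The plan is to attack the conjecture by combining a near-tight area estimate with a structural argument that forces "wasted" area near the boundary. The total area of $T$ is $\frac{\sqrt{3}}{4}(n+\varepsilon)^2$ and each unit triangle has area $\frac{\sqrt{3}}{4}$, so the naive area bound only yields $N \ge (n+\varepsilon)^2$, which falls short of $n^2+1$ when $\varepsilon$ is small. The task is therefore to find two extra units of area that every covering must duplicate or spend outside of $T$.

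First, I would handle the homothetic case signaled by the abstract: every covering triangle is $\bigtriangleup$ or $\bigtriangledown$ and so meets each horizontal line in a segment of length at most $1$. Since the cross-section of $T$ at height $y$ has length $(n+\varepsilon)-2y/\sqrt{3}$, I would choose a family of horizontal heights, track the number of triangles crossing each, and combine these with analogous counts along the other two side-parallel directions. The naive version of this argument loses a factor of $2$ because a unit triangle crosses two consecutive lines, so the crucial step is to set up a weighted-integral (LP-duality) argument whose dual certificate places extra mass near the three vertices of $T$. The precise thresholds $n+1/(n+1)$ and $n+1/n$ mentioned in the abstract suggest that the extremal configurations are very tight, so the weights must be calibrated with care.

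Between these two steps I would pay separate attention to corner triangles. A vertex of $T$ has internal angle $60^\circ$, matching that of a unit equilateral triangle, and the only efficient way to cover a vertex is to place a matching corner there; if I can prove that every cover must contain three such corner triangles and that the complement of three corner-neighborhoods still requires more than $n^2 - 1$ further triangles, the bound $n^2 + 2$ drops out.

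The hard part, and the reason the full conjecture remains open, is to remove the homotheticity assumption. A tilted unit equilateral triangle can meet a horizontal line in a segment longer than $1$, defeating the line-based bookkeeping, and rotation breaks the three-fold symmetry used in any axis-aligned invariant. I would try either a rotation-invariant fractional covering estimate (bounding the contribution of each triangle to a rotation-averaged density on $T$), or a Sperner-type topological argument distinguishing the three corners of $T$. I expect this final step to be the real obstacle, and any proof of the full conjecture will likely hinge on it.
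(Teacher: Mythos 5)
There is a fundamental mismatch here: the statement you are addressing is stated in the paper as an \emph{open conjecture} (Conjecture~\ref{conj:main}), and the paper offers no proof of it --- nor does your proposal. Your own text concedes the decisive point: a tilted unit equilateral triangle can meet a horizontal line in a segment of length greater than $1$, so the cross-section bookkeeping you set up collapses, and you explicitly say you expect this step to be ``the real obstacle.'' A proof proposal that ends by acknowledging that its central difficulty is unresolved is not a proof; at best it is a research plan, and the conjecture remains open both in the paper and after your sketch.

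Even for the restricted homothetic case (which is what the paper actually proves, as Theorems~\ref{thm:triangle-cover-cor} and \ref{thm:triangle-cover}), your outline has concrete gaps and does not match a workable argument. You propose counting triangles crossing a family of horizontal lines in three side-parallel directions, repairing the lost factor of $2$ by an unspecified ``weighted-integral (LP-duality)'' certificate, and separately forcing three corner triangles; none of these steps is carried out, and the claim that the complement of three corner neighborhoods ``still requires more than $n^2-1$ further triangles'' is exactly the kind of assertion that needs a proof and is not supplied. The paper's actual mechanism is different and is what makes the argument close: it records, for each $H$-triangle $T$, the cross-sectional length function folded modulo $1$ in the $y$-coordinate ($f_T(t)=\{t-y_T\}$ or $1-\{t-y_T\}$ for unit base and height), works in the abelian group $\mathcal{T}$ generated by such functions, and uses that any nonnegative element of $\mathcal{T}$ with integral $1/2$ must be $\{at+c\}$ or $1-\{at+c\}$ with $a$ a positive odd integer, hence has equally spaced discontinuities; since the enlarged covering triangles can be wiggled to create two discontinuities with irrational gap, a contradiction follows. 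Nothing in your sketch substitutes for this rigidity statement, so even the partial result is not established by your proposal, and the full conjecture certainly is not.
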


To get a feeling for Conjecture \ref{conj:main}, we follow \cite{soifer2010building} and prove the case $n = 2$.
That is, we need at least six unit equilateral triangles to cover an equilateral triangle $T$ of side $> 2$.
Take the six points consisting of the vertices of $T$ and their midpoints as in (a) of Figure \ref{fig:nis2case}.
Then, we need at least six triangles to cover $T$ since each unit equilateral triangle can cover at most one point.
The constructions by Conway and Soifer, as in (b) and (c) of Figure \ref{fig:nis2case}, show how to cover $T$ with six triangles.

\begin{figure}[h]
    \centering
    \includegraphics[width=0.9\linewidth]{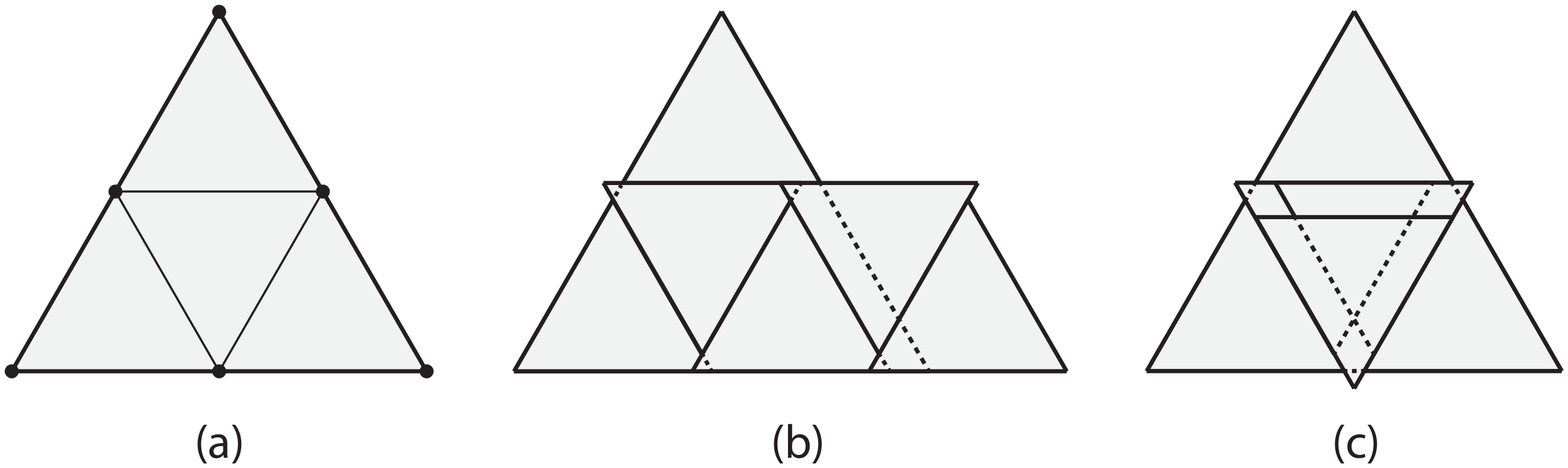}
    \caption{}
    \label{fig:nis2case}
\end{figure}

Dmytro Karabash and Soifer showed that for every \emph{non-equilateral} triangle \(T\), \(n^2 + 1\) triangles similar to \(T\) and with the ratio of linear sizes \(1: (n + \varepsilon)\) can cover \(T\) \cite{karabash2005covering}.
So the ``equilaterality'' of $T$ is essential for Conjecture \ref{conj:main} to be true \cite{conway2004coverup, soifer2009coffee}.
Also, Karabash and Soifer generalized Theorem \ref{thm:main-prev} and showed that a \emph{trigon}\footnote{A connected shape formed by unit equilateral triangles with matching edges.} made of \(n\) unit equilateral triangles can be covered by \(n + 2\) triangles of side \(1 - \varepsilon\) \cite{karabash2005covering}.
A similar problem of covering a square of side \(n + \varepsilon\) with unit squares has also been extensively studied \cite{chungEfficientPackingsUnit2020,chung2009packing,friedman2009packing,karabash2006sharp,karabash2008note,nagamochi2005packing}.
Still, to the best of the authors' knowledge, the original Conjecture \ref{conj:main} raised by Conway and Soifer hasn't been addressed directly in the literature.

Observe that in the constructions of Conway and Soifer (Figures \ref{fig:triangle1} and \ref{fig:triangle2}), all unit triangles are homothetic ($\bigtriangleup$ or $\bigtriangledown$) to $T$.
The generalized covering of trigons by Karabash and Soifer \cite{karabash2005covering} only uses triangles homothetic to $T$ as well.
Motivated by this, we show the following.

\begin{theorem}
If $T$ is an equilateral triangle of side \(> n\), then \(n^2 + 1\) unit equilateral triangles homothetic to $T$ cannot cover $T$.
\label{thm:triangle-cover-cor}
\end{theorem}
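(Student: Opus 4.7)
The plan is to argue by contradiction: assume $T$ of side $s = n+\varepsilon$ is covered by $N = n^2+1$ homothetic unit triangles. Place $T$ with base on the $x$-axis at vertices $(0,0), (s,0), (s/2, sh)$, where $h = \sqrt{3}/2$, and use barycentric coordinates $(a,b,c)$ with $a+b+c = s$ and $a, b, c \geq 0$. In these coordinates each homothetic unit $\bigtriangleup$ is $\{a \geq a_0, b \geq b_0, c \geq c_0\}$ with $a_0+b_0+c_0 = s-1$, and each unit $\bigtriangledown$ is $\{a \leq a_1, b \leq b_1, c \leq c_1\}$ with $a_1+b_1+c_1 = s+1$. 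A direct computation shows that two points $p, q \in T$ lie in a common homothetic unit triangle---and, remarkably, either both in a common $\bigtriangleup$ and in a common $\bigtriangledown$, or neither---if and only if $\max_i |p_i - q_i| \leq 1$; this follows from the identity $\sum_i\min(p_i,q_i) + \sum_i\max(p_i,q_i) = 2s$. Since shrinking $T$ preserves coverability, one may take $\varepsilon > 0$ arbitrarily small.

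The first natural attempt is a test-point argument: exhibit $n^2 + 2$ points in $T$ such that no homothetic unit triangle covers two, whence $N \geq n^2+2$. For $n = 2$ this is achieved by the six vertices and edge midpoints of $T$ (this is the proof sketched above). However, the ``unit ball'' of the $\max_i|d_i|$ seminorm on the simplex is a regular hexagon of area $3$, and packing bounds cap any such test-point set at roughly $(n+1)(n+2)/2$, which is $< n^2 + 2$ for $n \geq 3$. So test points alone are insufficient.

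The main plan is to supplement the test-point idea with a horizontal line and strip analysis. The $n+1$ horizontal lines $\ell_k \colon y = kh$ ($k = 0, \ldots, n$) partition $T$ into strips $S_k = T \cap \{(k-1)h \leq y \leq kh\}$ ($k = 1, \ldots, n$) plus an $\varepsilon$-sized triangular cap, and each $\ell_k \cap T$ has length $s - k$. Any covering triangle, whose vertical extent is exactly $h$, intersects at most one $\ell_k$ in a positive-length segment of length at most $1$. Repeating this line-length inequality in each of the three directions parallel to a side of $T$, together with the fact that every side of $T$ (length $> n$) requires at least $n+1$ covering triangles touching it while each triangle touches at most two sides, yields a system of coupled inequalities whose joint solution, after careful strip-by-strip charging, should force $N \geq n^2+2$.

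The main obstacle is the tight accounting: naive line-length or area bounds give only $N = \Omega(n^2/2)$ or $N \geq s^2$, both much weaker than $n^2+2$. To get the sharp bound, I expect the central lemma will carefully track how covering triangles propagate across each horizontal seam $y = kh$, splitting them into ``aligned'' triangles (whose edges lie exactly on some $\ell_k$) versus ``straddling'' triangles, and using a $\bigtriangleup$-versus-$\bigtriangledown$ imbalance among the straddling ones to produce $n^2$ triangles for the ``bulk'' of $T$ plus an extra $+2$ forced by the $\varepsilon$-overhang near two of $T$'s three corners. Since the entire argument uses only the horizontal base length and the vertical height of the covering triangles, not their equilaterality, the same proof should extend without change to the ``base-$1$, height-$h$'' (possibly sheared) generalization announced in the abstract.
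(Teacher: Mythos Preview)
Your proposal is not a proof: it is a plan whose decisive step is explicitly left open. You correctly identify that test points alone cannot work for $n\ge 3$, and your horizontal-line observation---each covering triangle meets at most one of the lines $y=kh$ in a segment of length at most $1$---is real and even points in the right direction. But the passage from ``naive line-length or area bounds give only $N\ge s^2$'' to the desired $N\ge n^2+2$ is exactly the content of the theorem, and you offer only the sentence ``I expect the central lemma will carefully track \dots''. That is not a lemma; it is a wish. In particular, your proposed split into ``aligned'' versus ``straddling'' triangles and the hoped-for $\bigtriangleup$/$\bigtriangledown$ imbalance are not established, and it is not clear that any finite combinatorial charging across the seams $y=kh$ can produce the extra $+2$: the obstruction in the paper's proof is ultimately an \emph{irrationality} phenomenon, not a counting one, which suggests that a purely combinatorial strip-by-strip argument of the kind you sketch may not close.

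The paper's route is quite different and worth knowing. For each $H$-triangle $U$ one records the cross-section length $\tilde f_U(t)=\mathrm{length}(U\cap\{y=t\})$ and then reduces modulo $1$ in the $y$-variable to get $f_U:[0,1)\to\mathbb{R}$, $f_U(t)=\sum_{m\in\mathbb{Z}}\tilde f_U(t+m)$. For a unit $H$-triangle this is $\{t-y_U\}$ or $1-\{t-y_U\}$. One then works in the abelian group $\mathcal{T}$ generated by all such sawtooth functions. If $n^2+1$ unit $H$-triangles $T_i$ covered $T$ (split into $n^2$ unit $H$-triangles $S_j$ with disjoint interiors), the difference $g=\sum_i f_{T_i}-\sum_j f_{S_j}\in\mathcal{T}$ would satisfy $g\ge 0$ and $\int_0^1 g=\tfrac12$. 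A short structural lemma shows that any nonnegative element of $\mathcal{T}$ with integral $\tfrac12$ must equal $\{at+c\}$ or $1-\{at+c\}$ for some odd $a$, hence has its jump discontinuities at a finite set of \emph{equally spaced} points of $[0,1)$. Since each $T_i$ may be perturbed freely within the slack allowed by the strict inequality on side lengths, one can force two of the $y_{T_i}$ to differ by an irrational amount, wrecking the equal spacing and giving the contradiction. This is the ``missing idea'' your sketch lacks: the passage through $\mathcal{T}$ converts the geometric covering constraint into a rigidity statement about piecewise-linear functions on $\mathbb{R}/\mathbb{Z}$, and it is this rigidity (not a charging argument) that supplies the extra unit.
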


Note that Theorem \ref{thm:triangle-cover-cor} does not solve Conjecture \ref{conj:main}, as Theorem \ref{thm:triangle-cover-cor} restricts the rotation of unit equilateral triangles ($\bigtriangleup$ or $\bigtriangledown$) while Conjecture \ref{conj:main} allows arbitrary rotations.
Still, our theorem generalizes to triangles with a side parallel to the $x$-axis which may not be equilateral (Theorem \ref{thm:triangle-cover}).

\begin{definition}
An \emph{$H$-triangle} (a shorthand notation for a \emph{horizontal triangle}) is a triangle with one side parallel to the $x$-axis.
For any $H$-triangle $T$, its \emph{base} is the length of the side $l$ parallel to the $x$-axis, and its \emph{height} is the distance between $l$ and the vertex of $T$ which is not on $l$. 
\end{definition}

\begin{theorem}
Let \(X\) be any union of \(n\) $H$-triangles of base $b$ and height $h$ with disjoint interiors. Then \(X\) cannot be covered by \(n + 1\) $H$-triangles of base less than $b$ and height less than $h$.
\label{thm:triangle-cover}
\end{theorem}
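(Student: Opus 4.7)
My plan is to reduce the theorem to a combinatorial conflict argument among points of $\mathbb{R}^2$. The first step is to prove a \emph{conflict lemma}: two points $P, Q$ cannot both lie in any $H$-triangle of base less than $b$ and height less than $h$ whenever $|P_y - Q_y| \geq h$, or $P_y = Q_y$ and $|P_x - Q_x| \geq b$. This follows from two elementary bounds on $H$-triangles: every horizontal cross-section has length at most the triangle's base, and the total vertical extent equals the height. The converse (that no forbidden pair forces conflict) can be verified by an explicit construction, parameterizing a containing $H$-triangle by placing its base near the lower point and tilting its apex toward the upper point.

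Applied to each $T_i \subseteq X$, the lemma shows that its three vertices --- the two base vertices, horizontally separated by exactly $b$ at the same height, and the apex, at vertical distance exactly $h$ from the base --- are pairwise in conflict. Consequently, any covering of $X$ needs at least three distinct covering triangles to accommodate the vertices of any single $T_i$.

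To upgrade this into the bound of $n+2$ distinct covering triangles for the whole of $X$, I would proceed by induction on $n$. The base case $n=1$ is immediate, giving $3 > 2 = n+1$. For the inductive step, the plan is to pick an extremal triangle $T_{i^*}$ of $X$ --- for instance, the one whose base lies at the smallest height, breaking ties by taking the leftmost base --- together with the covering triangle $S_{j^*}$ containing the leftmost base vertex of $T_{i^*}$. The extremality of $T_{i^*}$, combined with the slice-width bound and the vertical-extent bound on $S_{j^*}$, should confine $S_{j^*}$ to a region around $T_{i^*}$, so that removing both $T_{i^*}$ from $X$ and $S_{j^*}$ from the cover leaves a union of $n-1$ $H$-triangles covered by $n$ smaller $H$-triangles, contradicting the inductive hypothesis.

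The main obstacle is the locality step: even though $T_{i^*}$ is extremal, nothing \emph{a priori} prevents $S_{j^*}$ from stretching vertically into a $T_i$ stacked above $T_{i^*}$, since its height only satisfies the bound $<h$ and its apex can be arbitrarily displaced horizontally. I expect this step to require a careful geometric argument that combines the slice-width bound $<b$, the vertical-extent bound $<h$, and the extremality of $T_{i^*}$, possibly with a case analysis distinguishing triangles sharing an edge from triangles strictly separated. In degenerate configurations where a direct surgical removal is unavailable, a refinement may be needed --- for example, augmenting the vertex conflict set with carefully chosen interior points of each $T_i$, or an amortized discharging scheme that distributes credit from $T_i$'s to covering triangles based on how each $S_j$ interacts with the boundary features of $X$.
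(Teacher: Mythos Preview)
Your conflict lemma and the base case $n=1$ are correct, but the inductive step has a genuine gap, and in fact the underlying vertex-conflict framework is provably too weak to give the bound $n+2$. Take $b=h=1$ and $n=2$, with $T_1$ the upward triangle on base $[0,1]\times\{0\}$ with apex $(1/2,1)$, and $T_2$ the upward triangle on base $[100,101]\times\{1/2\}$ with apex $(201/2,3/2)$. Among the six vertices the only cross-conflicts your lemma detects are between $(0,0),(1,0)$ and the apex $(201/2,3/2)$, since every other cross pair has $0<|\Delta y|<1$; by your own converse, each such pair \emph{can} sit inside a single thin skewed $H$-triangle of base and height $<1$. The resulting conflict graph is $3$-colourable (pair $(0,0)$ with $(100,1/2)$, $(1,0)$ with $(101,1/2)$, and $(1/2,1)$ with $(201/2,3/2)$), so the vertex argument yields only $\ge 3$ covers, not the required $\ge 4$. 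The surgical induction fails for the same reason: a cover $S_{j^*}$ of the extremal vertex $(0,0)$ can be a downward $H$-triangle with apex near $(0,0)$ and base far to the right at height close to $1$, and such a triangle meets $T_2$ near $y=1/2$; removing it may uncover part of $T_2$. Augmenting by finitely many interior points does not obviously help, since the vertical offset $1/2$ can be replaced by any value in $(0,1)$ and a single finite scheme must work uniformly.

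The paper avoids all of this with an analytic device. To each $H$-triangle $T$ it assigns the function $f_T:[0,1)\to\mathbb{R}$ recording horizontal slice length at every height, folded modulo $1$; these generate an abelian group $\mathcal{T}$ of piecewise-linear functions with half-integer integral and the property that $f(t)-f(s)\equiv a(t-s)\pmod 1$ for a fixed integer slope $a$. A covering forces $g=\sum_i f_{T_i}-\sum_j f_{S_j}\ge 0$ with $\int g=1/2$, and a short structural lemma then pins $g$ down to a rigid sawtooth $t\mapsto\{at+c\}$ or $1-\{at+c\}$ with equidistributed jumps, which is contradicted by perturbing the enlarged covers so that two jump locations differ by an irrational amount. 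The slice-length inequality is thus enforced at \emph{every} height simultaneously --- exactly what a finite conflict set cannot achieve.
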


Note that in Theorem \ref{thm:triangle-cover}, the $H$-triangles are not necessarily congruent or similar to each other.
To recover Theorem \ref{thm:triangle-cover-cor} from Theorem \ref{thm:triangle-cover}, assume that an equilateral $H$-triangle \(T\) of side \(> n\) can be covered by \(n^2 + 1\) unit equilateral $H$-triangles.
Shrink the covering so that \(T\) has sides exactly \(n\) and the small triangles have sides \(< 1\).
Then we get a contradiction by Theorem \ref{thm:triangle-cover} as \(T\) is a union of \(n^2\) unit equilateral $H$-triangles with disjoint interiors.

As the coverings of \(T\) by Conway and Soifer (Figures \ref{fig:triangle1} and \ref{fig:triangle2}) and the coverings of trigons by Karabash and Soifer only use $H$-triangles,
we determine the exact minimum number of unit equilateral $H$-triangles required for covering $T$.

\begin{corollary}
The minimum number of unit equilateral $H$-triangles required to cover an equilateral $H$-triangle of side \(n + \varepsilon\) with a sufficiently small $\varepsilon > 0$ is \(n^2+2\).

Also, the minimum number of unit equilateral $H$-triangles required to cover a trigon made of \(n\) equilateral $H$-triangles of side \(1 + \varepsilon\) with a sufficiently small $\varepsilon > 0$ is \(n + 2\).
\label{cor:triangle-cover-number}
\end{corollary}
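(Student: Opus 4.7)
The plan is to deduce both statements as immediate consequences of already-stated results: the upper bounds come from constructions in the cited literature, and the lower bounds reduce, after a trivial rescaling, to Theorem \ref{thm:triangle-cover} (or its specialization Theorem \ref{thm:triangle-cover-cor}). In both cases, the upper and lower bounds match at $n^2+2$ and $n+2$ respectively, so the only work is to verify that each cited construction consists of $H$-triangles and that Theorem \ref{thm:triangle-cover} applies with the right parameters.

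For the first statement I would handle the upper bound by invoking Theorem \ref{thm:main-prev}: the two Conway--Soifer coverings shown in Figures \ref{fig:triangle1} and \ref{fig:triangle2} visibly consist only of $\bigtriangleup$ and $\bigtriangledown$ triangles, hence of $H$-triangles, so $n^2+2$ unit equilateral $H$-triangles suffice. For the matching lower bound I would apply Theorem \ref{thm:triangle-cover-cor} directly to $T$: any hypothetical covering by $n^2+1$ unit equilateral $H$-triangles of the side-$(n+\varepsilon)$ triangle would immediately contradict it.

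For the second statement, the upper bound is the Karabash--Soifer result quoted in the introduction, namely that a trigon made of $n$ unit equilateral triangles can be covered by $n+2$ equilateral $H$-triangles of side $1-\delta$ for any small $\delta>0$; choosing $\delta = \varepsilon/(1+\varepsilon)$ and dilating by $1+\varepsilon$ converts this into a covering of a trigon of side $1+\varepsilon$ by $n+2$ unit equilateral $H$-triangles. For the lower bound I would assume for contradiction that $n+1$ unit equilateral $H$-triangles cover a trigon $X$ made of $n$ equilateral $H$-triangles of side $1+\varepsilon$, then dilate the entire picture by $1/(1+\varepsilon)$: now $X$ becomes a union of $n$ unit equilateral $H$-triangles with disjoint interiors, and the $n+1$ covering triangles become equilateral $H$-triangles of base $1/(1+\varepsilon)<1$ and height $\sqrt{3}/(2(1+\varepsilon)) < \sqrt{3}/2$. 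Theorem \ref{thm:triangle-cover} with $b=1$ and $h=\sqrt{3}/2$ then yields the contradiction.

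I do not anticipate any real obstacle here: all of the substantive geometry has already been packaged into Theorem \ref{thm:triangle-cover}, and what remains is bookkeeping, namely checking the two scalings and appealing to the fact, already emphasized in the excerpt, that both the Conway--Soifer and Karabash--Soifer coverings use only $H$-triangles. The proof I would write would accordingly be only a few lines long for each of the two statements.
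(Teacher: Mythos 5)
Your proposal matches the paper's own (implicit) argument: the upper bounds come from the Conway--Soifer and Karabash--Soifer constructions, which use only $H$-triangles, and the lower bounds follow from Theorem \ref{thm:triangle-cover} (via Theorem \ref{thm:triangle-cover-cor} for $T$, and via the same rescaling trick for the trigon). The bookkeeping with the dilations is correct, so this is essentially the same proof the paper intends.
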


We can ask for the maximum possible $\varepsilon > 0$ such that the equilateral triangle $T$ of side $n + \varepsilon$ can be covered by $n^2+2$ unit triangles.
From area considerations, we get $(n + \varepsilon)^2 \geq n^2+2$ and the trivial upper bound 
$\varepsilon \leq \sqrt{n^2+2} - n = 1/n - 1/(2n^3) + O(1/n^5)$.
If we require all triangles to be $H$-triangles, then our method can be used to determine the exact maximum value of $\varepsilon$.

\begin{theorem}
The largest value of $\varepsilon > 0$ such that the equilateral $H$-triangle $T$ of side $n + \varepsilon$ can be covered by $n^2+2$ equilateral $H$-triangles is $\varepsilon = 1/(n+1)$.
\label{thm:max-epsilon}
\end{theorem}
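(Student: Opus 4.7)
The proof of Theorem~\ref{thm:max-epsilon} breaks into an achievability direction and an optimality direction.

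\emph{Achievability.} For $\varepsilon = 1/(n+1)$, I would explicitly construct a covering of $T$ by $n^2+2$ unit equilateral $H$-triangles, generalizing the Conway--Soifer example of Figure~\ref{fig:triangle2}. One natural construction places three unit $\bigtriangleup$ triangles at the three corners of $T$, each sharing a vertex with a corner of $T$, and tiles the remaining central region by $n^2-1$ more unit triangles arranged in a triangular-lattice pattern. A direct calculation verifies that the arrangement fits exactly when $\varepsilon = 1/(n+1)$; any larger value causes the central pattern to fail to cover the widened hexagonal gap between the corner triangles.

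\emph{Optimality.} For $\varepsilon > 1/(n+1)$, suppose for contradiction that $T$ of side $n+\varepsilon$ is covered by $n^2+2$ unit equilateral $H$-triangles. The strategy is to construct a subset $X \subseteq T$ that is a union of $n^2+1$ $H$-triangles with pairwise-disjoint interiors, each of base $>1$ and height $>\sqrt{3}/2$. Then the $n^2+2$ unit triangles of the assumed covering also cover $X$, contradicting Theorem~\ref{thm:triangle-cover}, which asserts that $X$ requires at least $n^2+3$ smaller $H$-triangles. Crucially, Theorem~\ref{thm:triangle-cover} is applied with $m = n^2+1$ and dimensions slightly exceeding those of a unit equilateral $H$-triangle, so the strict inequalities $b > 1$ and $h > \sqrt{3}/2$ needed to invoke the theorem are available.

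\emph{Main obstacle.} The principal challenge is constructing the subset $X$. A pure area estimate yields only $\varepsilon > \sqrt{n^2+1}-n$, strictly weaker than $\varepsilon > 1/(n+1)$; the tight threshold therefore reflects a subtle geometric constraint rather than an area count. To achieve the exact bound, one exploits the full freedom granted by Theorem~\ref{thm:triangle-cover}: the $H$-triangles in $X$ need not be equilateral and may mix $\bigtriangleup$ and $\bigtriangledown$ orientations. Concretely, I would use a non-uniform horizontal-strip decomposition of (a suitable subregion of) $T$ with strip heights slightly exceeding $\sqrt{3}/2$, subdividing each strip into $H$-triangles of base just over $1$ using both orientations. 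Making the strip heights, the triangle bases, and the assignment of $\bigtriangleup$'s and $\bigtriangledown$'s all compatible with being contained in $T$ and being pairwise disjoint is what pins the geometry down to the threshold $\varepsilon = 1/(n+1)$; the assumption $\varepsilon > 1/(n+1)$ is precisely what permits such a decomposition to exist.
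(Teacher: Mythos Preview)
Your optimality strategy has a genuine gap: the subset $X$ you want does not exist. You propose to find $n^2+1$ pairwise-disjoint $H$-triangles inside $T$, each of base $>1$ and height $>\sqrt{3}/2$, and then invoke Theorem~\ref{thm:triangle-cover}. But already for $n=1$ this is impossible: with $\varepsilon$ just above $1/2$, the triangle $T$ has height just above $\tfrac{3}{2}\cdot\tfrac{\sqrt 3}{2}$, so any $H$-triangle of height $>\tfrac{\sqrt3}{2}$ placed inside $T$ must have its horizontal side within $\tfrac{\sqrt3}{4}$ of the base of $T$; at that level $T$ is narrower than $2$, so two such triangles with base $>1$ cannot be made interior-disjoint. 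For general $n$, a strip decomposition into $n$ horizontal bands of height just over $\tfrac{\sqrt3}{2}$ recovers exactly the standard $n^2$-triangle tiling and no more: the $k$th band (counting from the top) admits at most $k$ upward and $k-1$ downward triangles of base $>1$, giving $\sum_{k=1}^n (2k-1)=n^2$. Since Theorem~\ref{thm:triangle-cover} requires all triangles of $X$ to share a \emph{common} base $b$ and height $h$, the ``full freedom'' you invoke is more constrained than you suggest, and the extra $(n^2+1)$st triangle simply does not fit when $\varepsilon$ is only slightly above $1/(n+1)$.

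The paper's proof does not reduce to Theorem~\ref{thm:triangle-cover} at all. Instead it works directly with the functions $f_T,f_S\in\mathcal{T}$ and the half-integrality of integrals on $\mathcal{T}$: writing $h=f_S-(f_{T_0}+1)$ where $T_0$ has side $n$, one shows $h(t)\ge -t+((n+1)\varepsilon-1)$ for all $t$, and then a rigidity lemma (Corollary~\ref{cor:jump}) forces $\int h\ge 1/2$, contradicting $\int h=0$. This integral-gap argument is what captures the exact threshold $1/(n+1)$; a packing reduction of the kind you outline can at best reproduce the bound coming from Theorem~\ref{thm:triangle-cover}, which is the weaker statement that $\varepsilon>0$ requires $n^2+2$ triangles. (A minor point on achievability: the construction attaining $\varepsilon=1/(n+1)$ is Figure~\ref{fig:triangle1}, not Figure~\ref{fig:triangle2}; the latter only reaches $\varepsilon=1/(2n)$.)
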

This maximum value $\varepsilon = 1/(n+1)$ is achieved by the first construction of Conway and Soifer \cite{conway2004coverup} (Figure \ref{fig:triangle1}); see Section \ref{sec:description}.
We also consider the same question with $n^2 + 3$ triangles and obtain the following result.
\begin{theorem}
The largest value of $\varepsilon > 0$ such that the equilateral $H$-triangle $T$ of side $n + \varepsilon$ can be covered by $n^2+3$ equilateral $H$-triangles is $\varepsilon = 1/n$.
\label{thm:max-epsilon-3}
\end{theorem}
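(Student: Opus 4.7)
The plan is to prove both the achievability of $\varepsilon = 1/n$ and the impossibility of $\varepsilon > 1/n$. For achievability, I would exhibit an explicit covering of $T$ of side $n + 1/n$ by $n^2 + 3$ unit equilateral $H$-triangles by adapting the first Conway--Soifer construction (Figure \ref{fig:triangle1}). Their construction uses $n^2 + 2$ triangles to cover $T$ of side $n + 1/(n+1)$; inserting one more unit triangle at a suitable location---for instance, by shifting one row of the triangulation to stretch the covered region---should extend the reach from $n + 1/(n+1)$ to $n + 1/n$. A direct geometric check would complete this step.

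For optimality, suppose for contradiction that $T$ of side $n + \varepsilon$ with $\varepsilon > 1/n$ is covered by $n^2 + 3$ unit equilateral $H$-triangles. The plan is to apply Theorem \ref{thm:triangle-cover} by constructing $n^2 + 2$ pairwise interior-disjoint $H$-triangles $X_1, \dots, X_{n^2+2}$ inside $T$, each with base strictly greater than $1$ and height strictly greater than $\sqrt{3}/2$. Let $b > 1$ and $h > \sqrt{3}/2$ denote, respectively, the minimum base and the minimum height among the $X_i$. Theorem \ref{thm:triangle-cover}, applied to the union $X_1 \cup \cdots \cup X_{n^2+2} \subseteq T$, asserts that this union cannot be covered by $n^2 + 3$ $H$-triangles of base less than $b$ and height less than $h$. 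Since unit equilateral $H$-triangles have base $1 < b$ and height $\sqrt{3}/2 < h$, this contradicts the assumed covering of $T$.

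The main obstacle will be constructing the $n^2 + 2$ disjoint $H$-triangles inside $T$ when $\varepsilon > 1/n$. The standard triangulation provides $n^2$ equilateral sub-triangles of side $1 + \varepsilon/n > 1$, each with base $> 1$ and height $> \sqrt{3}/2$; the challenge is to produce two additional disjoint $H$-triangles satisfying the same constraints. I would expect the construction to modify one or more rows of this triangulation---reshuffling a horizontal strip into $2n + 1$ $H$-triangles rather than the usual $2n - 1$, say---exploiting the excess slack $\varepsilon - 1/n > 0$ to make room for the two extras without shrinking any piece's base to $\le 1$ or height to $\le \sqrt{3}/2$. This step parallels the analogous construction in the proof of Theorem \ref{thm:max-epsilon}, where $n^2 + 1$ disjoint such $H$-triangles must be found for $\varepsilon > 1/(n+1)$; verifying that the sharper threshold $\varepsilon = 1/n$ emerges precisely from the geometry of fitting two extras (rather than one) is the delicate technical crux, since a pure area estimate only gives the weaker bound $\varepsilon > \sqrt{n^2 + 2} - n$.
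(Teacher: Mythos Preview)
Your optimality argument has a genuine gap: the packing you hope to build does not exist. Take $n=2$ and $\varepsilon$ just above $1/2$, and stretch vertically so that unit equilateral $H$-triangles have base and height $1$ and $T$ has base and height $2.5^{+}$. You would need six disjoint $H$-triangles of base $>1$ and height $>1$ inside $T$. In any horizontal strip of height just above $1$, at most two upward triangles fit along the bottom (total base $\le 2.5$) and at most one downward triangle fits along the top (width $<1.5$), so a strip holds at most three. If you push the two upward apexes to the slanted edges of $T$ to make room for that one downward triangle, the three pieces tile the strip exactly, leaving nothing for a second layer to start below $y=1$; but above $y=1$ the remaining triangle has base and height at most $1.5$, hence area $\le 1.125$, while three more pieces of base $>1$ and height $>1$ need area $>1.5$. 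No rearrangement escapes this: the bottom of $T$ is too short for three upward bases, and any level where two downward bases could sit already has width $<2$. Thus you cannot pack $n^2+2=6$ such triangles, and the reduction to Theorem~\ref{thm:triangle-cover} collapses. Your reference to ``the analogous construction in the proof of Theorem~\ref{thm:max-epsilon}'' is also off: the paper does not prove Theorem~\ref{thm:max-epsilon} by exhibiting a packing of $n^2+1$ triangles --- that packing fails for the same reason.

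The paper's actual route is function-theoretic. After the vertical stretch it sets $f_{S}=\sum_{i}f_{S_{i}}\in\mathcal{T}$, takes the reference $r=f_{T_{0}}+1\in\mathcal{T}$ with $T_{0}$ the inscribed side-$n$ triangle, and shows directly that $g(t)=f_{T}(t)-r(t)\ge n\varepsilon-1$ for all $t\in[0,1)$. Since the covering gives $h=f_{S}-r\ge g$, Corollary~\ref{cor:jump} (the quantisation lemma: a uniform positive gap in $\mathcal{T}$ forces the integrals to differ by at least $1$) yields $\int h\ge 1$, contradicting $\int h=(n^{2}+3)/2-(n^{2}+2)/2=1/2$. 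The point is that Corollary~\ref{cor:jump} supplies exactly the extra strength beyond area that your packing was supposed to provide --- but it does so analytically, without needing the packing to exist.
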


The proof of Theorems \ref{thm:triangle-cover}, \ref{thm:max-epsilon}, and \ref{thm:max-epsilon-3} are based on analyzing specific properties of an abelian group $\mathcal{T}$ (Definition \ref{def:t}) of functions from $[0, 1)$ to $\mathbb{R}$.
To the best of the authors' knowledge, such a method is entirely new for understanding covering problems.

\section{A description of two coverings by Conway and Soifer.}
\label{sec:description}

Before proving the main theorems (Theorems \ref{thm:triangle-cover}, \ref{thm:max-epsilon}, and \ref{thm:max-epsilon-3})
we describe the constructions of Conway and Soifer (Figures \ref{fig:triangle1} and \ref{fig:triangle2}) in detail that prove Theorem \ref{thm:main-prev}.
Readers interested in the main proofs of the paper can jump right to Section \ref{sec:main1}.

In Figure \ref{fig:triangle1}, we first cover the upper part of $T$ which is an equilateral triangle of side length $n-1$ with $(n-1)^2$ triangles (filled with small circles).
The remaining part is a trapezoid of side lengths $1+\varepsilon$, $n+\varepsilon$, $1+\varepsilon$, and $n-1$.
Now interleave $2n-1$ triangles from the right to cover the trapezoid (white triangles).
We can check that the remaining part is a parallelogram of side lengths $1+\varepsilon$ and $n \varepsilon$,
subtracted by a small equilateral triangle of length $\varepsilon$ on the right-upper corner.
This parallelogram can be covered with two triangles if $\varepsilon \leq 1 / (n + 1)$ (filled with rhombi).
The picture depicts the maximum case $\varepsilon = 1 / (n + 1)$ for $n = 4$.

In Figure \ref{fig:triangle2}, we cover the large triangle $T$ from the bottom.
We first cover the bottommost layer with $n$ upward triangles and $n-1$ downward triangles,
with each triangle misaligned from neighboring triangles by $\varepsilon' = \varepsilon / (n-1)$ (filled with large circles).
The covered trapezoid has side lengths 
$$
1 - \varepsilon', n +(n-1)\varepsilon' = n + \varepsilon, 1 - \varepsilon', n - 1 + n\varepsilon'
$$
with small `bumps' of length $\varepsilon'$ from triangles directing upwards.
We then stack the next bottommost layer (filled with rhombi) with $n-1$ upward triangles and $n-2$ downward triangles misaligned by $\varepsilon''$.
To cover the upper side of the large-circle-patterned trapezoid tightly, our new $\varepsilon''$ should satisfy 
$$(n-1) + (n-2)\varepsilon'' = (n-1) + n\varepsilon'$$
hence $\varepsilon'' = n\varepsilon' / (n-2)$.
We continue this until we stack the total of $(n-1)$ layers, where the topmost layer (filled with small circles) consists of two upward triangles and one downward triangle with deviation
$$
\frac{n}{n-2} \frac{n-1}{n-3} \frac{n-2}{n-4} \cdots \frac{3}{1} \varepsilon' = \frac{n(n-1)}{2} \varepsilon' = \frac{n}{2} \varepsilon.
$$
The remaining part of the triangle can be covered with three triangles of unit lengths (white triangles) if $1 + 2\cdot \frac{n}{2}\varepsilon \le 3/2$, or equivalently, if $\varepsilon \leq 1/(2n)$.
The figure depicts the maximal case $\varepsilon = 1/(2n)$ for $n = 4$.

\section{Proof of Theorem \ref{thm:triangle-cover}.}
\label{sec:main1}

We now prove Theorem \ref{thm:triangle-cover}.
By rescaling the coordinates, we can assume that both the base $b$ and the height $h$ are equal to 1 without loss of generality.
We will use the following notion frequently.

\begin{definition}
For every $H$-triangle \(T\), define its \(y\)-coordinate \(y_T\) as the \(y\)-coordinate of the horizontal side of $T$.
\end{definition}

For every $H$-triangle $T$, we define a function $f_T : [0, 1) \rightarrow \mathbb{R}$ that will be the main ingredient for our proof.

\begin{definition}
For every $H$-triangle $T$ and $t \in \mathbb{R}$, 
define $\tilde{f}_T(t)$ as the length of the segment of the line \(y = t\) covered by \(T\) unless $t = y_T$ and the line contains the base.
For $t = y_T$, choose the value of \(\tilde{f}_T(y_T)\) so that \(\tilde{f}_T\) is right-continuous: the base of $T$ if \(T\) is pointed upwards, and 0 if \(T\) is pointed downwards.
Define \(f_T : [0, 1) \to \mathbb{R}\) as the function \(f_T(t) = \sum_{n \in \mathbb{Z}} \tilde{f}_T(t + n)\).
\label{def:vertical-triangle-function}
\end{definition}

For any real number \(x\), let \(\left\{ x \right\}\) be the value in \([0, 1)\) equal to \(x\) modulo 1. If $T$ is an $H$-triangle with base 1 and height 1, we can characterize all possibilities of $f_T$ as the following.

\begin{corollary}
If an $H$-triangle \(T\) of base 1 and height 1 is pointed downwards, then \(f_T(t) = \{t - y_T\}\), and if \(T\) is pointed upwards, then \(f_T(t) = 1 - \{t - y_T\}\).
\label{cor:unit-projection-ftn}
\end{corollary}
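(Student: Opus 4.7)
The plan is to unwind Definition \ref{def:vertical-triangle-function} directly, computing $\tilde{f}_T$ explicitly first and then identifying the single nonzero term in the sum defining $f_T$. For an $H$-triangle $T$ of base $1$ and height $1$ pointed upwards, its base sits on the line $y = y_T$ and its apex on $y = y_T + 1$; since the two lateral edges can be parameterized linearly in the height, a standard parallel-slice computation shows that the cross-section at height $y_T + s$ has length exactly $1 - s$ for every $s \in [0, 1]$, regardless of where the apex lies horizontally. Combined with the right-continuity convention at the seam $t = y_T$, this yields $\tilde{f}_T(t) = 1 - (t - y_T)$ on $[y_T, y_T + 1)$ and $\tilde{f}_T(t) = 0$ elsewhere. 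A symmetric computation for a downward-pointing $T$ (base at $y_T$, apex at $y_T - 1$) gives $\tilde{f}_T(t) = (t - y_T) + 1$ on $[y_T - 1, y_T)$ and $0$ elsewhere.

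Next, I would observe that in both cases the support of $\tilde{f}_T$ is a half-open interval of length exactly $1$, so for every $t \in [0, 1)$ there is a unique integer $n$ contributing to $f_T(t) = \sum_{n \in \mathbb{Z}} \tilde{f}_T(t + n)$. In the upward case this $n$ is characterized by $t + n - y_T \in [0, 1)$, which forces $t + n - y_T = \{t - y_T\}$. In the downward case it is characterized by $t + n - y_T \in [-1, 0)$, which similarly forces $t + n - y_T + 1 = \{t - y_T\}$. Plugging these identities into the piecewise formulas for $\tilde{f}_T$ from the previous step immediately gives $f_T(t) = 1 - \{t - y_T\}$ in the upward case and $f_T(t) = \{t - y_T\}$ in the downward case.

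The only real piece of bookkeeping is the boundary value $\{t - y_T\} = 0$, i.e., when $t + n$ lands exactly on $y_T$ for the contributing $n$. In the upward case the formula demands $f_T(t) = 1$, and this is exactly the value the right-continuity clause of Definition \ref{def:vertical-triangle-function} assigns to $\tilde{f}_T(y_T)$; in the downward case the formula demands $f_T(t) = 0$, again matching the prescribed convention. So the right-continuity choice is precisely what makes the closed-form valid on the whole interval $[0, 1)$, and I do not anticipate any further obstacle.
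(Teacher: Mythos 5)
Your proposal is correct and is exactly the routine verification the paper leaves implicit: the paper states this corollary without proof as an immediate consequence of Definition \ref{def:vertical-triangle-function}, and your slice computation, identification of the unique contributing term in the sum, and check of the right-continuity convention at $\{t - y_T\} = 0$ fill in precisely that argument. No gaps.
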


\begin{figure}[h]
  \begin{minipage}{0.5\textwidth}
    \centering
    \includegraphics[width=0.9\linewidth]{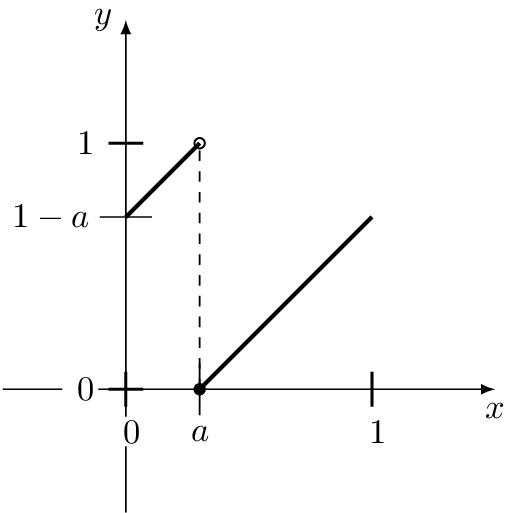}
  \end{minipage}\hfill
  \begin{minipage}{0.5\textwidth}
    \centering
    \includegraphics[width=0.9\linewidth]{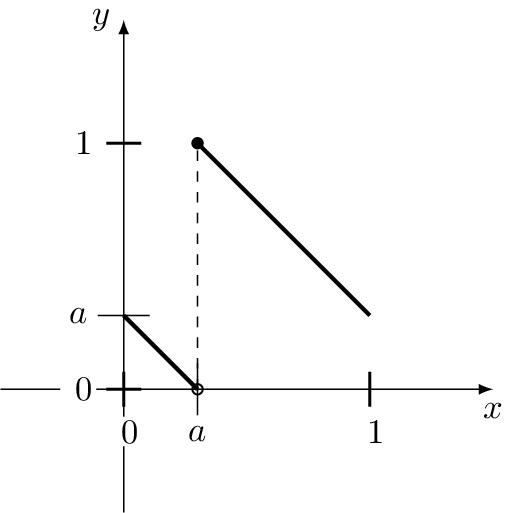}
  \end{minipage}
  \label{fig:graph1}
  \caption{Graphs of $t \mapsto \{t - a\}$ and $t \mapsto 1 - \{t - a\}$ for $a = 0.3$.}
\end{figure}

We proceed with the proof of Theorem \ref{thm:triangle-cover}.
Assume that the union \(X\) of \(n\) $H$-triangles \(S_1, S_2, \dots, S_n\) with base 1, height 1, and disjoint interiors can be covered by \(n+1\) $H$-triangles \(T'_0, T'_1, \dots, T'_n\) of base and height \(< 1\).
For each $T_i'$, take an arbitrary $H$-triangle $T_i$ of base 1 and height 1 so that \(T_i\) contains \(T_i'\).

Define \(\tilde{g} : \mathbb{R} \to \mathbb{R}\) as the function \(\tilde{g} = \sum_{i=0}^n \tilde{f}_{T_i} - \sum_{j=1}^n \tilde{f}_{S_j}\). Take any \(t\) different from the $y$-coordinates \(y_{T_i}\) and \(y_{S_j}\) of the triangles. As the triangles \(T_0, T_1, \dots, T_n\) cover the union \(X\) of disjoint triangles \(S_1, S_2, \dots, S_n\), the total length of the portion of the line \(y = t\) covered by \(T_i\)'s is at least the total length of the parts of the line \(y = t\) covered by \(S_j\)'s. Thus, we have \(\tilde{g}(t) \geq 0\). As \(\tilde{g}\) is right-continuous, by sending the right limit, we have \(\tilde{g}(t) \geq 0\) for every \(t \in \mathbb{R}\), including the case where \(t\) is equal to the \(y\)-coordinate of some triangle.

Define \(g : [0, 1) \to \mathbb{R}\) as \(g = \sum_{i=0}^n f_{T_i} - \sum_{j=1}^n f_{S_j}\) so that we have \(g(t) = \sum_{n \in \mathbb{Z}} \tilde{g}(t + n)\). Then, consequently, we have \(g(t) \geq 0\) for every \(t \in [0, 1)\). 
It turns out that this is sufficient to derive a contradiction. 

The following group is the key to our proofs.
\begin{definition}
\label{def:t}
Define \(\mathcal{T}\) as the abelian group generated by all functions \(t \mapsto \{ t - a\}\) and \(t \mapsto 1 - \{t - a\}\) with \(a \in [0, 1)\).
\end{definition}
In other words, $\mathcal{T}$ is the set of all functions from $[0, 1)$ to $\mathbb{R}$ that can be expressed as a finite addition and subtraction of functions of form \(t \mapsto \{ t - a\}\) or \(t \mapsto 1 - \{t - a\}\) with arbitrary \(a \in [0, 1)\).
Then \(g \in \mathcal{T}\) by Corollary \ref{cor:unit-projection-ftn}.

We now examine the properties of $g \in \mathcal{T}$.
\begin{definition}
Denote the integral of any integrable function \(f : [0, 1) \to \mathbb{R}\) over the whole \([0, 1)\) as simply \(\int f\).
\end{definition}

\begin{lemma}

Any function \(f : [0, 1) \to \mathbb{R}\) in \(\mathcal{T}\) has the following properties.

\begin{enumerate}[(i)]
\item
  \(f\) is right-continuous.
\item
  \(f\) is differentiable everywhere except for a finite number of points, and the derivative is always equal to a fixed constant \(a \in \mathbb{Z}\).
\item
  For all \(s, t \in [0, 1)\), the value \(f(t) - f(s)\) is equal to \(a(t - s)\) modulo 1.
\item
  The integral \(\int f\) is equal to \(b / 2\) for some \(b \in \mathbb{Z}\) where \(b - a\) is divisible by 2.
\end{enumerate}

\label{lem:triangle-group}
\end{lemma}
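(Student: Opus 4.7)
My plan is to show that the set $\mathcal{S}$ of functions $f : [0, 1) \to \mathbb{R}$ satisfying all four conditions (i)--(iv) forms a subgroup of the additive group of real-valued functions on $[0, 1)$. Since $\mathcal{T}$ is by definition generated by $\phi_\alpha(t) = \{t - \alpha\}$ and $\psi_\alpha(t) = 1 - \{t - \alpha\}$ for $\alpha \in [0, 1)$, it suffices to check that each generator lies in $\mathcal{S}$ and that $\mathcal{S}$ is closed under addition and negation.

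For the generators, each is right-continuous with a single jump at $t = \alpha$ (of size $-1$ for $\phi_\alpha$ and $+1$ for $\psi_\alpha$), giving (i); each has the constant integer derivative $+1$ (for $\phi_\alpha$) or $-1$ (for $\psi_\alpha$) everywhere on $[0, 1) \setminus \{\alpha\}$, giving (ii). Property (iii) follows by a case split on whether $s$ and $t$ lie on the same side of $\alpha$: if they do, $f(t) - f(s) = \pm(t - s)$ exactly; if they do not, the single jump of integer size contributes an integer, preserving the congruence modulo $1$. Property (iv) holds because both generators integrate to $1/2$ over $[0, 1)$; the resulting invariants $(a, b) = (1, 1)$ for $\phi_\alpha$ and $(-1, 1)$ for $\psi_\alpha$ both satisfy $b \equiv a \pmod 2$.

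For closure under $\pm$: (i) is immediate. For (ii), if $f_i$ is differentiable off a finite set $E_i$ with constant derivative $a_i$ on $[0, 1) \setminus E_i$, then $f_1 \pm f_2$ is differentiable off the finite set $E_1 \cup E_2$ with constant derivative $a_1 \pm a_2$ there. The congruence in (iii) is additive in both arguments modulo $1$: if $f_i(t) - f_i(s) \equiv a_i(t-s) \pmod 1$ for $i = 1, 2$, then $(f_1 \pm f_2)(t) - (f_1 \pm f_2)(s) \equiv (a_1 \pm a_2)(t-s) \pmod 1$. For (iv), integrals add linearly so $\int(f_1 \pm f_2) = (b_1 \pm b_2)/2$, and the parity condition is preserved because $(b_1 \pm b_2) - (a_1 \pm a_2) = (b_1 - a_1) \pm (b_2 - a_2)$, a $\mathbb{Z}$-linear combination of even integers.

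I do not expect any genuine obstacle; the argument is essentially bookkeeping. The only point requiring some care is in (ii), where one must ensure the \emph{same} integer $a$ governs the derivative on the entire complement of $E_1 \cup E_2$ (not merely on each connected component between consecutive exceptional points); this works because the generators themselves already enjoy this strengthened property---a jump in function value does not cause the derivative to change across the discontinuity---and this property is clearly preserved by finite $\pm$-combinations.
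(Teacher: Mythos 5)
Your proof is correct and follows the same route as the paper: verify that the four properties are preserved under addition and negation, and check them directly for the generators $t \mapsto \{t-\alpha\}$ and $t \mapsto 1-\{t-\alpha\}$. You simply supply the bookkeeping details (jump sizes, slopes $\pm 1$, integrals $1/2$, parity of $b-a$) that the paper leaves to the reader.
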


\begin{proof}
Check that all the claimed properties are closed under addition and negation. Then, check that the functions \(t \mapsto \{t - c\}\) and \(t \mapsto 1 - \{t - c\}\) with \(c \in [0, 1)\) satisfy the claimed properties.
\end{proof}

We observed that \(g \in \mathcal{T}\) and \(g(t) \geq 0\) for every \(t \in [0, 1)\). Also, for any $H$-triangle \(T\) of base 1 and height 1, we have \(\int f_T = 1/2\), so we also have \(\int g = 1/2\) by the definition \(g = \sum_{i=0}^n f_{T_i} - \sum_{j=1}^n f_{S_j}\). 
We now use the following lemma.

\begin{lemma}

Let \(f : [0, 1) \to \mathbb{R}\) be any function in \(\mathcal{T}\) such that \(\int f = 1/2\) and \({f(t) \geq 0}\) for every \(t \in [0, 1)\). Then there is a positive odd integer \(a\) and some \(c \in [0, 1)\) such that \(f\) is either \(f(t) = \left\{ at + c \right\}\) or \(f(t) = 1 - \left\{ at + c \right\}\).

\label{lem:triangle-unit-area}
\end{lemma}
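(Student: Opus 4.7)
The overall strategy will be to pin $f$ down modulo the integers, and then force the integer-valued correction to vanish. First, Lemma~\ref{lem:triangle-group}(ii) and (iv) together give that $f$ has a fixed integer slope $a$ off a finite set and $\int f = b/2$ with $b \equiv a \pmod 2$; since $\int f = 1/2$ forces $b = 1$, the slope $a$ is odd and in particular nonzero. Lemma~\ref{lem:triangle-group}(iii) then yields $f(t) \equiv f(0) + at \pmod 1$ for all $t$, so $f$ is already determined modulo $1$ by $a$ and $f(0)$ alone.

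The plan is to split on the sign of $a$. When $a > 0$, I will set $c := \{f(0)\} \in [0,1)$ and study the defect $m(t) := f(t) - \{at + c\}$. By the congruence above, $m$ is integer valued; by Lemma~\ref{lem:triangle-group}(i)--(ii), $m$ is right-continuous and piecewise constant with finitely many pieces; a change of variables gives $\int \{at + c\}\,dt = 1/2$, so $\int m = 0$; and since $f \geq 0$ together with $\{at + c\} < 1$ strictly forces $m > -1$ pointwise, the integer-valued $m$ satisfies $m \geq 0$. A nonnegative right-continuous piecewise constant function with integral $0$ must be identically zero, so $f(t) = \{at + c\}$ as desired. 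When $a < 0$, I will let $a' := -a$ and choose $c \in [0,1)$ with $c \equiv -f(0) \pmod 1$, and run the parallel argument with $m(t) := f(t) - (1 - \{a't + c\})$, producing the second form in the statement with the positive odd slope $a'$.

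The one step that needs genuine care is the sign check $m \geq 0$ in the $a < 0$ case: at the $a'$ exceptional points $t$ where $\{a' t + c\} = 0$, the inequality $m(t) = f(t) - 1 + \{a't + c\} \geq -1$ does not automatically improve to $m(t) \geq 0$. I plan to handle this using right-continuity of $m$ together with piecewise constancy: each such exceptional point is the left endpoint of an interval on which $m$ is constant and, by the previous bound on the interior, equal to a nonnegative integer; so the endpoint value is nonnegative as well. Everything else in the argument is routine bookkeeping with Lemma~\ref{lem:triangle-group}.
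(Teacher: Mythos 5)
Your proof is correct and follows essentially the same route as the paper's: use Lemma \ref{lem:triangle-group} to get an odd slope $a$ and the congruence $f(t) \equiv at + f(0) \pmod 1$, observe that $f$ dominates the corresponding sawtooth by a nonnegative integer defect, and force the defect to vanish because both functions integrate to $1/2$, using right-continuity to fix the values at the finitely many exceptional points. The only difference is organizational: you split on the sign of $a$ at the outset and compare with the right-continuous candidate in each case, whereas the paper compares with $\{at+c\}$ for both signs and recovers $f$ as its right limit at the end.
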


\begin{figure}[h]
  \begin{minipage}{0.5\textwidth}
    \centering
    \includegraphics[width=0.9\linewidth]{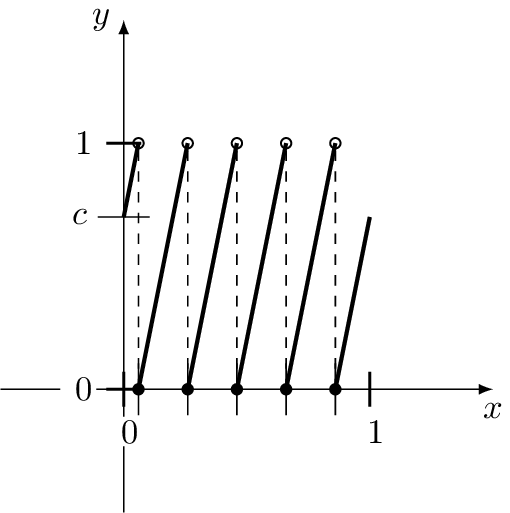}
  \end{minipage}\hfill
  \begin{minipage}{0.5\textwidth}
    \centering
    \includegraphics[width=0.9\linewidth]{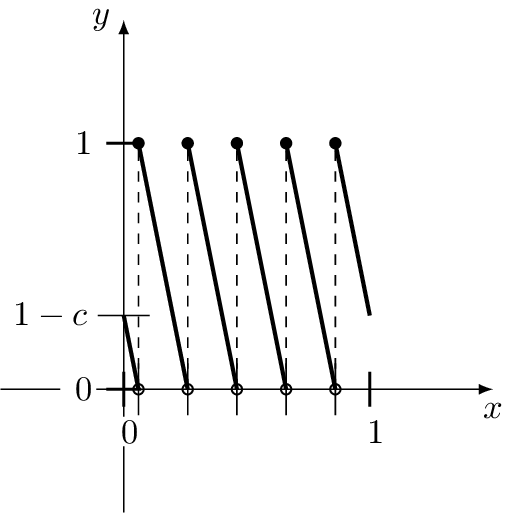}
  \end{minipage}
  \caption{Graphs of $t \mapsto \{at + c\}$ and $t \mapsto 1 - \{at + c\}$ for $a = 5$ and $c = 0.7$.}
  \label{fig:graph2}
\end{figure}

\begin{proof}
By Lemma \ref{lem:triangle-group}, there is some odd number \(a \in \mathbb{Z}\) such that \(f'(t) = a\) for all \(t\) except for a finite number of values. Let \(f(0) = c\), then by Lemma \ref{lem:triangle-group} again, we have \(f(t)\) equal to \(at + c\) modulo 1 for all \(t \in [0, 1)\). Let \(g : [0, 1) \to \mathbb{R}\) be the function \(g(t) = \{at + c\}\).
Then, for every \(t \in [0, 1)\), as the value \(f(t)\) is nonnegative and equal to \(at + c\) modulo 1, we have \(f(t) \geq g(t) \geq 0\). But note that the integral \(\int g\) is exactly equal to \(1/2\) (see Figure \ref{fig:graph2}). So \(f\) and \(g\) should be equal almost everywhere. As \(f\) is right-continuous by Lemma \ref{lem:triangle-group}, \(f(t)\) should be equal to the right limit \(g(t +) = \lim_{u \to t+} g(u)\) of \(g\).
If \(a > 0\), then \(g\) is right-continuous, so \(f(t) = g(t) = \left\{ at + c \right\}\). If \(a < 0\), then the right limit of \(g\) is \(1 - \left\{ -at + \left\{ - c \right\} \right\}\) (this is the value in \((0, 1]\) equal to \(at + c\) modulo 1).
\end{proof}

We now finish the proof of Theorem \ref{thm:triangle-cover}. By Lemma \ref{lem:triangle-unit-area}, the discontinuities of \(g = \sum_{i=0}^n f_{T_i} - \sum_{j=1}^n f_{S_j}\) have to be equidistributed in \([0, 1)\) with a gap of \(1/a\) for some positive odd number \(a\).
But each \(T_i\) can be taken arbitrarily as long as it contains the smaller triangle \(T_i'\) of side \(< 1\). So take each \(T_i\) so that the \(y\)-coordinates \(y_{T_0}, y_{T_1}, \dots, y_{T_n}\) are nonzero and different from \(y_{S_1}, y_{S_2}, \dots, y_{S_n}\) modulo 1.
Also, we can wiggle $T_1$ a bit to make \(y_{T_1} - y_{T_0}\) an irrational number.
Then \(g\) has discontinuities at \(\{y_{T_0}\}, \{y_{T_1}\}, \dots, \{y_{T_n}\} \in [0, 1)\), and two of them have an irrational gap. This gives contradiction and finishes the proof.

\section{Proof of Theorems \ref{thm:max-epsilon} and \ref{thm:max-epsilon-3}.}
\label{sec:max-epsilon}

Using the group $\mathcal{T}$ in Definition \ref{def:t} was the key idea of the proof of Theorem \ref{thm:triangle-cover}.
We use the same idea to determine the maximum $\varepsilon > 0$ such that the equilateral $H$-triangle $T$ of side $n + \varepsilon$ can be covered by $n^2+2$ or $n^2+3$ unit equilateral $H$-triangle respectively (Theorems \ref{thm:max-epsilon} and \ref{thm:max-epsilon-3}).

We first construct the optimal coverings.
The analysis in Section \ref{sec:description} shows that Figure \ref{fig:triangle1} is a covering of $T$ with $n^2+2$ $H$-triangles for $\varepsilon = 1/(n+1)$.
It can be modified to coverings of $T$ with $n^2 + 3$ unit $H$-triangles for $\varepsilon = 1/n$
as well (Figure \ref{fig:triangle3}).
In the last row of Figure \ref{fig:triangle1}, replace any three adjacent triangles with an equilateral $H$-triangle of side 2 made out of four unit equilateral $H$-triangles.

\begin{figure}[h]
  \centering
  \includegraphics[width=0.9\linewidth]{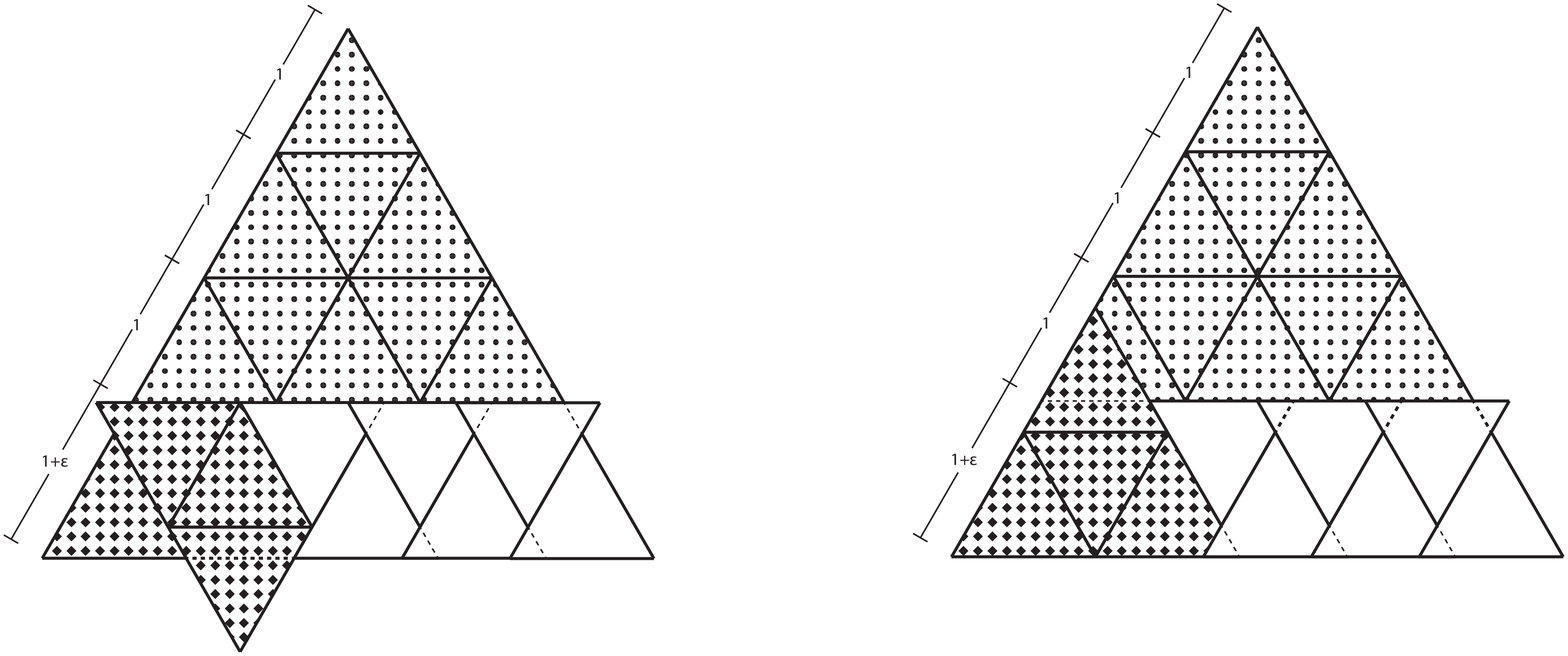}
  \caption{Two coverings of a equilateral $H$-triangle of length $n + 1/n$ with $n^2 + 3$ unit triangles ($n = 4$).}
  \label{fig:triangle3}
\end{figure}

Before proceeding further, we prove a corollary of Lemma \ref{lem:triangle-unit-area} to use.
\begin{corollary}
\label{cor:jump}
If $g, h \in \mathcal{T}$ and there is some $\delta > 0$ such that $g(t) + \delta \leq h(t)$ for all $t \in [0, 1)$, then $\int g + 1 \leq \int h$.
\end{corollary}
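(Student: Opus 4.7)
The plan is to reduce the corollary to a statement about a single function in $\mathcal{T}$. Let $f = h - g$; since $\mathcal{T}$ is an abelian group under addition, $f \in \mathcal{T}$. The hypothesis then reads $f(t) \geq \delta > 0$ for every $t \in [0,1)$, and by linearity of the integral the desired conclusion $\int g + 1 \leq \int h$ becomes simply $\int f \geq 1$.

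Next, I would invoke property (iv) of Lemma \ref{lem:triangle-group} to constrain $\int f$ to have the form $b/2$ for some $b \in \mathbb{Z}$. Pointwise positivity immediately gives $\int f \geq \delta > 0$, so $b \geq 1$. The only remaining possibility to exclude is the boundary case $b = 1$, i.e.\ $\int f = 1/2$, so the whole proof really amounts to ruling out this one case.

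The main obstacle is exactly this boundary case, and Lemma \ref{lem:triangle-unit-area} is precisely what handles it. If $\int f = 1/2$, the lemma pins $f$ down to one of the two explicit sawtooth forms $f(t) = \{at + c\}$ or $f(t) = 1 - \{at + c\}$ for some positive odd integer $a$ and some $c \in [0,1)$. In the first form, $f$ actually attains the value $0$ at a point where $at + c$ lands on an integer; in the second form, $f$ takes values arbitrarily close to $0$ just before each jump discontinuity. Either way $\inf_{t \in [0,1)} f(t) = 0$, which directly contradicts the uniform strict bound $f \geq \delta > 0$. Hence $b \geq 2$, which gives $\int f \geq 1$ and completes the proof. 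The role of the strict positive $\delta$, as opposed to merely $h \geq g$, is precisely to cut off this infimum-zero case.
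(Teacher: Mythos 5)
Your proof is correct and follows essentially the same route as the paper: set $f = h - g \in \mathcal{T}$, use the half-integrality of $\int f$ from Lemma \ref{lem:triangle-group}(iv) together with positivity to reduce to the case $\int f = 1/2$, and rule that out via Lemma \ref{lem:triangle-unit-area} since both sawtooth forms have infimum $0$ on $[0,1)$, contradicting $f \geq \delta > 0$. Your explicit verification that the infimum is $0$ in each of the two forms is a nice touch the paper leaves implicit.
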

\begin{proof}
Let $f = h - g \in \mathcal{T}$. As $\delta \leq f(t)$ for all $t \in [0, 1)$, we have $0 < \int f$. 
Because $\int f$ is always a half-integer, the only possible case where $\int f < 1$ is when $\int f = 1/2$.
But by Lemma \ref{lem:triangle-unit-area}, such an $f(t)$ cannot satisfy $\delta \leq f(t)$ for all $t \in [0, 1)$, which leads to a contradiction.
So we have $1 \leq \int f$.
\end{proof}

We now show that if $\varepsilon > 1/(n+1)$ (resp. $\varepsilon > 1/n$), then it is impossible to cover $T$ with $n^2+2$ (resp. $n^2+3$) unit equilateral $H$-triangles.
Assume by contradiction that a covering with $N = n^2+2$ or $n^2+3$ unit equilateral $H$-triangles $S_1, S_2, \dots, S_N$ exists.
Stretch the covering vertically by a factor of $2 / \sqrt{3}$ so that each $S_i$ has base and height 1, and $T$ has base and height $n + \varepsilon$.
In this way, each function $f_{S_i}$ satisfies the condition of Corollary 2.
Without loss of generality, assume that the bottom side of $T$ is the $x$-axis so that $y_T = 0$.
Let $f_S = \sum_{i=1}^N f_{S_i}$.
We will derive a contradiction from $f_T \leq f_S$.

Define $T_0$ as the equilateral $H$-triangle of side $n$ with $y_{T_0}=0$ pointed upwards, sharing the leftmost vertex with $T$ on the line $y=0$.
Define $r = f_{T_0} + 1$, then $r \in \mathcal{T}$, and we have $\int r = (n^2+2)/2$.
Our strategy is to compare $f_T$ and $f_S$ using $r \in \mathcal{T}$ as a reference.
Define $g = f_T - r$ and $h = f_S - r$, then we have $g \leq h \in \mathcal{T}$.

\begin{figure}[h]
  \centering
  \includegraphics[width=0.5\linewidth]{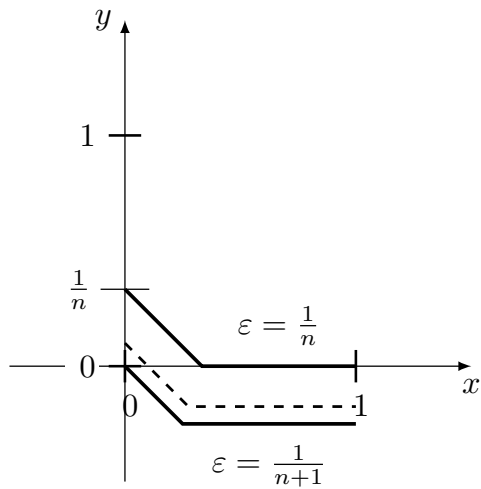}
  \caption{Graph of $g(t)$ for values of $\varepsilon$ between $1/(n+1)$ and $1/n$ ($n=3$). Bold lines are the graphs where $\varepsilon = 1/(n+1)$ or $1/n$, and dashed line is the graph for a general $\varepsilon$ in between.}
  \label{fig:graph-g}
\end{figure}

We now compute a lower bound of $g$.
Note that $T$ is obtained from $T_0$ by padding a parallelogram of base $\varepsilon$ and height $n$ (which is $(\sqrt{3}/2) n$ before stretching) to the right of $T_0$ and then putting a triangle of base and height $\varepsilon$ on top of the parallelogram. 
So by comparing $T$ with $T_0$, we have 
$$
f_T(t) \geq f_{T_0}(t) + n\varepsilon + \max(0, \varepsilon - t)
$$
where the equality holds for every $\varepsilon \leq 1$. So
$$g(t) \geq - (1 - n\varepsilon) + \max(0, \varepsilon - t)$$
by subtracting $r(t) = f_{T_0}(t) + 1$ from $f_T(t)$ (see Figure \ref{fig:graph-g}).

If $N = n^2+2$ and $\varepsilon > 1/(n+1)$, then we have 
$$
h(t) \geq g(t) \geq -(1 - n \varepsilon) + \varepsilon -t = -t + ((n + 1) \varepsilon - 1)
$$
for all $0 \leq t < 1$.
So by Corollary \ref{cor:jump} applied for the functions $-t$ and $h$ with $\delta = (n + 1) \varepsilon - 1 > 0$, we have $1/2 \leq \int h$, and this contradicts our assumption that $\int h = \int f_S - \int r = 0$.
This proves Theorem \ref{thm:max-epsilon}.

If $N = n^2+3$ and $\varepsilon > 1/n$, then we have
$$
h(t) \geq g(t) \geq n \varepsilon - 1
$$ 
for all $0 \leq t < 1$.
By comparing $0$ and $h$ using Corollary \ref{cor:jump} with $\delta = n \varepsilon - 1$, we have $1 \leq \int h$, and this contradicts our assumption that $\int h = \int f_S - \int r = 1/2$.
This proves Theorem \ref{thm:max-epsilon-3}.

\section{Conclusion and Remarks}
A conjecture by Conway and Soifer states that an equilateral triangle of side $> n$ cannot be covered by $n^2+1$ unit equilateral triangles (Conjecture \ref{conj:main}).
We made partial progress towards their conjecture by showing that $n^2+1$ unit equilateral triangles with a side parallel to the $x$-axis ($\bigtriangleup$ or $\bigtriangledown$) cannot cover an equilateral triangle of side $> n$ parallel to the $x$-axis (Theorem \ref{thm:triangle-cover-cor}).

Our method analyzes an abelian group $\mathcal{T}$ (Definition \ref{def:t}) of piecewise-linear functions.
The method generalizes to triangles with a side parallel to the $x$-axis that may not necessarily be equilateral (Theorem \ref{thm:triangle-cover}).
In particular, for any $b, h > 0$, a triangle of base $> nb$ parallel to the $x$-axis and height $> nh$ cannot be covered by $n^2+1$ triangles, each with a base $b$ parallel to the $x$-axis and height $h$.

A natural strengthening of Conjecture \ref{conj:main} is to find the largest side of an equilateral triangle that can be covered by $n^2 + k$ unit equilateral triangles for $1 \leq k \leq 2n$.
\begin{question}
\label{que:largest}
What is the largest side length of an equilateral triangle that can be covered by $n^2 + k$ unit equilateral triangles for $1 \leq k \leq 2n$?
\end{question}
Using the same method of analyzing the abelian group $\mathcal{T}$, we were able to answer the following variant of Question \ref{que:largest} with $k = 2, 3$ (Theorems \ref{thm:max-epsilon} and \ref{thm:max-epsilon-3}).
\begin{question}
\label{que:largest-h}
What is the answer to Question \ref{que:largest} if the unit equilateral triangles are required to have a side parallel to the $x$-axis?
\end{question}
The readers are encouraged to make further progress towards Question \ref{que:largest-h} with other values of $k$ or the full Conjecture \ref{conj:main}.

A `dual' version of Question \ref{que:largest} is to find the \emph{minimum} side length of an equilateral triangle in which we can \emph{pack} $n^2 - k$ unit equilateral triangles for $1 \leq k \leq 2n - 2$.
\begin{question}
\label{que:largest-square}
What is the minimum side length of an equilateral triangle in which we can pack $n^2 - k$ unit equilateral triangles for $1 \leq k \leq 2n - 2$?
\end{question}
Note that a related problem of packing unit squares inside a square has been studied extensively.
\begin{question}
\label{que:smallest-square}
What is the minimum side length of a square in which we can pack $n^2 - k$ unit squares for $1 \leq k \leq 2n - 2$?
\end{question}
Erich Friedman \cite{friedman2009packing} gives a comprehensive survey of known results on Question \ref{que:smallest-square}. Hiroshi Nagamochi \cite{nagamochi2005packing} answered Question \ref{que:smallest-square} for $k=1, 2$ by showing that the minimum side length of a square in which $n^2 - 2$ or $n^2 - 1$ unit squares can be packed is exactly $n$, akin to Conjecture \ref{conj:main}.

\begin{acknowledgment}{Acknowledgment.}
We thank Alexander Soifer for valuable comments, and the anonymous reviewers for their insightful feedback that greatly improved our draft.
Jineon Baek acknowledges support from the Korea Foundation for Advanced Studies (KFAS) for graduate fellowship.
\end{acknowledgment}

\begin{biog}
  \item[Jineon Baek] is a Ph.D. candidate in mathematics at the University of Michigan--Ann Arbor. He received his B.S. in mathematics from the Pohang University of Science and Technology (POSTECH). His main research interests are in optimization problems of combinatorics and geometry.
  \begin{affil}
  Department of Mathematics, University of Michigan--Ann Arbor, Ann Arbor, MI 48109-1043\\
  jineon@umich.edu
  \end{affil}
  
  \item[Seewoo Lee] is a Ph.D. candidate in mathematics at the University of California--Berkeley. He received his B.S. and M.S. in mathematics from the Pohang University of Science and Technology (POSTECH).
  \begin{affil}
  Department of Mathematics, University of California--Berkeley, Berkeley, CA 94720\\
  seewoo5@berkeley.edu
  \end{affil}
  \end{biog}
  \vfill\eject


\begin{thebibliography}{99}

\bibitem{chungEfficientPackingsUnit2020} Chung, F., Graham, R. (2020). Efficient Packings of Unit Squares in a Large Square. \textit{Discrete Comput. Geom.} 64(3): 690--699. doi.org/10.1007/s00454-019-00088-9

\bibitem{chung2009packing} Chung, F., Graham, R. (2009). Packing equal squares into a large square.
\textit{J. Comb. Theory.} 116(6): 1167--1175. doi.org/10.1016/j.jcta.2009.02.005

\bibitem{conway2004coverup} Conway, J., Soifer, A. (2004). Cover-up. \textit{Geombinatorics.} 14(1): 8--9.

\bibitem{conway2005covering} Conway, J., Soifer, A. (2005). Covering a Triangle with Triangles. \textit{Amer. Math. Monthly.} 112(1): 78.

\bibitem{friedman2009packing} Friedman, E. (2009). Packing Unit Squares in Squares: A Survey and New Results. \textit{Electron. J. Comb.} DS7-Aug. doi.org/10.37236/28

\bibitem{karabash2006sharp} Karabash, D., Soifer, A. (2006). A Sharp Upper Bound for Cover-up Squares. 
\textit{Geombinatorics.} 16(1): 219--226.

\bibitem{karabash2008note} Karabash, D., Soifer, A. (2008). Note on Covering a Square with Equal Squares.
\textit{Geombinatorics.} 18(1): 13--17.

\bibitem{karabash2005covering} Karabash, D., Soifer, A. (2005). On Covering of Trigons. \textit{Geombinatorics.} 15(1): 13--17.

\bibitem{nagamochi2005packing} Nagamochi, H. (2005). Packing Unit Squares in a Rectangle. \textit{Electron. J. Comb.} 12(1): R37. doi.org/10.37236/1934

\bibitem{soifer2010building} Soifer, A. (2010). Building a bridge III: From problems of mathematical
Olympiads to open problems of mathematics. \textit{Math. Compet.} 23(1): 27--38.

\bibitem{soifer2009coffee} Soifer, A. (2009). Coffee Hour and the Conway--Soifer Cover-Up. In: Soifer, A. \emph{How Does One Cut a Triangle?}, 2nd ed. Springer, pp. 147-156.

\end{thebibliography}
\end{document}